\documentclass[leqno]{article}
\usepackage{geometry}
\usepackage{graphicx}	
\usepackage[cp1251]{inputenc}
\usepackage[english]{babel}
\usepackage{mathtools}
\usepackage{amsfonts,amssymb,mathrsfs,amscd,amsmath,amsthm}

\usepackage{verbatim}

\usepackage{url}


\usepackage{stmaryrd}




\def\thtext#1{
  \catcode`@=11
  \gdef\@thmcountersep{. #1}
  \catcode`@=12
}

\def\threst{
  \catcode`@=11
  \gdef\@thmcountersep{.}
  \catcode`@=12
}

\theoremstyle{plain}

\newtheorem{thm}{Theorem}[section]
\newtheorem{prop}[thm]{Proposition}
\newtheorem{cor}[thm]{Corollary}

\theoremstyle{definition}
\newtheorem{dfn}[thm]{Definition}

\newtheorem{examp}[thm]{Example}


 \pagestyle{myheadings}

 \catcode`@=11
 \def\.{.\spacefactor\@m}
 \catcode`@=12


\def\R{\mathbb R}

\def\a{\alpha}
\def\b{\beta}
\def\e{\varepsilon}

\def\D{\Delta}
\def\g{\gamma}

\def\l{\lambda}
\def\r{\rho}
\def\s{\sigma}

\def\0{\emptyset}
\def\:{\colon}
\def\<{\langle}
\def\>{\rangle}
\def\[{\llbracket}
\def\]{\rrbracket}

\def\rom#1{\emph{#1}}
\def\({\rom(}
\def\){\rom)}

\def\ss{\subset}

\def\sp{\supset}

\def\x{\times}

\def\bA{{\bar A}}

\def\diam{\operatorname{diam}}

\def\dis{\operatorname{dis}}

\def\dLS{\operatorname{dLS}}

\def\cD{{\cal D}}

\def\cH{{\cal H}}

\def\cP{{\cal P}}
\def\cR{{\cal R}}

\usepackage{epigraph}

\begin{document}
\title{Calculating Gromov--Hausdorff Distance by means of Borsuk Number}
\author{Alexander O.~Ivanov, Alexey A.~Tuzhilin}
\date{}
\maketitle

\begin{abstract}
The purpose of this article is to demonstrate the connection between the properties of the Gromov--Hausdorff distance and the Borsuk conjecture. The Borsuk number of a given bounded metric space $X$ is the infimum of cardinal numbers $n$ such that $X$ can be partitioned into $n$ smaller parts (in the sense of diameter). An exact formula for the Gromov--Hausdorff distance between bounded metric spaces is obtained under the assumption that the diameter and cardinality of one space are less than the diameter and Borsuk number of another, respectively. Using the results of Bacon's equivalence between the Lusternik--Schnirelmann and Borsuk problems, several corollaries are obtained.
\end{abstract}

\setlength{\epigraphrule}{0pt}

\section{Introduction}
\markright{\thesection.~Introduction}
A natural idea to compare subsets of a given metric space or, more generally, to compare different metric spaces using appropriate distances, leads to appearance of so-called hyperspaces,  i.e., metric spaces of some spaces, see for example~\cite{IllNasdler}. For two subsets of a fixed metric space, a natural distance function was defined by F.~Hausdorff~\cite{Hausdorff} as the infimum of positive numbers $r$ such that one subset is contained in the $r$-neighborhood of the other and vice-versa. This function is referred as the \emph{Hausdorff distance}, and it is a metric on the family of all closed bounded subsets of a metric space, see for example~\cite{BurBurIva}. The Hausdorff distance was generalized to the case of two metric spaces by D.~Edwards~\cite{Edwards} and independently by M.~Gromov~\cite{Gromov} by means of isometrical embeddings into all possible metric spaces, see definitions in Section~\ref{sec:GH}. The resulting function is known as the \emph{Gromov--Hausdorff distance\/} between metric spaces.

The geometry of the Gromov--Hausdorff distance is rather tricky and is intensively investigated by many authors, see a review in~\cite{BurBurIva}. The aim of this paper is to demonstrate relations between Gromov--Hausdorff distance properties and the Borsuk Conjecture. Informally speaking, see a short review and references in Section~\ref{sec:BoLuSch} below, the Borsuk Problem for a given bounded metric space $X$ asks to find the least possible number $\b(X)$ of pieces necessary to partition $X$ in such a way that all the parts are smaller than the entire $X$ (in the sense of diameter). The value $\b(X)$ is referred as \emph{Borsuk number}. Recently, see~\cite{ITBor}, the authors have already observed that the Borsuk number can be calculated in terms of the Gromov--Hausdorff distance to a single-distance space of an appropriate diameter and cardinality. In the present paper an exact formula for the Gromov--Hausdorff distance between bounded metric spaces is obtained under the assumptions that the diameter and the cardinality of one space is less than the diameter and the Borsuk number of the other one, respectively, see Theorem~\ref{thm:dist-n-simplex-bigger-dim}. Several corollaries are obtained using P.~Bacon~\cite{Bacon} equivalence results, see Corollaries~\ref{cor:dLS}, \ref{cor:dLSph}, and~\ref{cor:Bor_GH}.

\section{Preliminaries: Gromov--Hausdorff Distance}\label{sec:GH}
\markright{\thesection.~Preliminaries: Gromov--Hausdorff Distance}

Let $X$ be an arbitrary nonempty set. By $\# X$ we denote the cardinality of a set $X$. Recall that a function $\r\:X\x X\to\R$ is called a \emph{metric\/} if it is non-negative, non-degenerate, symmetric, and satisfies the triangle inequality. A set with a metric is called a \emph{metric space}.  If such a function $\r$ is permitted to take infinite values, then we call $\r$ \emph{a generalized metric}. If we omit the non-degeneracy condition, i.e., permit $\r(x,y)=0$ for some distinct $x$ and $y$, then such a function is referred as  \emph{pseudometric}. If $\r$ is non-negative and symmetric only, and $\r(x,x)=0$ for any $x\in X$, then we call such $\r$ by a \emph{distance function}, instead of metric or pseudometric. As a rule, if it is not ambiguous, we write $|xy|$ for $\r(x,y)$.

In what follows, all metric spaces are endowed with the corresponding metric topology. Let $X$ be a metric space. The closure of a subset  $A\ss X$  is denoted by $\bA$. For its arbitrary nonempty subset $A\ss X$ and a point $x\in X$, we put $|xA|=|Ax|=\inf\big\{|ax|: a\in A\big\}$. Further, for $r\ge 0$ we set
$$
B_r(x)=\big\{y\in X: |xy|\le r\big\},\quad U_r(x)=\big\{y\in X: |xy|< r\big\},
$$
and
$$
B_r(A)=\big\{y\in X: |Ay|\le r\big\},\quad U_r(A)=\big\{y\in X: |Ay|< r\big\}.
$$

Recall the basic concepts and results concerning the Hausdorff and Gromov--Hausdorff distances. The details can be found in~\cite{BurBurIva}. For a set $X$, by $\cP_0(X)$ we denote the collection of all nonempty subsets of $X$. Let $X$ be a metric space, and  $A,B\in\cP_0(X)$. It is well-known that the following three expressions
\begin{align*}
& \max\Bigl(\sup\bigl\{|aB|:a\in A\bigr\},\,\sup\bigl\{|Ab|:b\in B\bigr\}\Bigr),\\
& \inf\bigl\{r\in[0,\infty]:A\ss B_r(B)\ \&\ B_r(A)\sp B\bigr\},\\ 
& \inf\bigl\{r\in[0,\infty]:A\ss U_r(B)\ \&\ U_r(A)\sp B\bigr\}
\end{align*}
define the same value which is denoted by $d_H(A,B)$. It is easy to see that $d_H$ is non-negative, symmetric, and $d_H(A,A)=0$ for any nonempty $A\ss X$, thus, $d_H$ is a distance on the family $\cP_0(X)$ of all nonempty subsets of the metric space $X$, moreover, it is a generalized pseudometric on $\cP_0(X)$, i.e., it satisfies the triangle inequality. The function $d_H$ is referred as the \emph{Hausdorff distance}. It is well-known that the Hausdorff distance $d_H$ is a metric on the set $\cH(X)$ of all nonempty closed bounded subsets of the metric space $X$.

Further, let $X$ and $Y$ be metric spaces.
A triple $(X',Y',Z)$, consisting of a metric space $Z$ and its two subsets $X'$ and $Y'$, which are isometric to $X$ and $Y$, respectively, is called the \emph{realization of the pair $(X,Y)$}. The infimum of real numbers $r$ for which there exists a realization $(X',Y',Z)$ of $(X,Y)$ satisfying the inequality $d_H(X',Y')\le r$ is denoted by $d_ {GH}(X,Y)$. The value $d_{GH}(X,Y)$ is evidently non-negative, symmetric, and $d_{GH}(X,X)=0$ for any metric space $X$. Thus, $d_{GH}$ is a generalized distance function on each set of metric spaces.

\begin{dfn}
The value $d_{GH}(X,Y)$ is called \emph{the Gromov--Hausdorff distance\/} between the metric spaces $X$ and $Y$.
\end{dfn}

It is well known that on any set of metric spaces the function $d_{GH}$ is a generalized pseudometric. In the general case, $d_{GH}$ is not a metric, it can take the infinite value and be equal to zero for distinct metric spaces. However, if we restrict ourselves to compact metric spaces, considered up to isometry, then $d_{GH}$ is a metric.

For specific calculations of the Gromov--Hausdorff distance, other equivalent definitions of this distance are useful.

Recall that \emph{a relation\/} between sets $X$ and $Y$ is defined as a subset of the Cartesian product $X\x Y$. Similarly to the case of mappings, for each $\s\in\cP_0(X\x Y)$  there are defined the \emph{image\/} $\s(x):=\bigl\{y\in Y:(x,y)\in\s\bigr\}$ of any $x\in X$ and the \emph{pre-image\/} $\s^{-1}(y)=\bigl\{x\in X:(x,y)\in\s\bigr\}$ of any $y\in Y$. Also, for $A\ss X$ and $B\ss Y$ their \emph{image\/} and \emph{pre-image\/} are defined as the union of the images and pre-images of their elements, respectively. A relation $R$ between $X$ and $Y$ is called a \emph{correspondence\/} if $R(X)=Y$ and $R^{-1}(Y)=X$. Thus, the correspondence can be considered as a surjective multivalued mapping. Denote by $\cR(X,Y)$ the set of all correspondences between $X$ and $Y$.

If $X$ and $Y$ are metric spaces, then for each relation $\s\in\cP_0(X\x Y)$ the value
$$
\dis\s:=\sup\Bigl\{\bigl||xx'|-|yy'|\bigr|:(x,y),\,(x',y')\in\s\Bigr\}
$$
is referred as the \emph{distortion\/} of $\s$.

The key well-known result on the connection between correspondences and the Gromov--Hausdorff distance is the following Theorem.

\begin{thm}\label{thm:GH-metri-and-relations}
For any metric spaces $X$ and $Y$ the equality
$$
d_{GH}(X,Y)=\frac12\inf\bigl\{\dis R:R\in\cR(X,Y)\bigr\}
$$
holds.
\end{thm}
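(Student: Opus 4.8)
The statement is an equality between two infima, so the plan is to establish the two inequalities $d_{GH}(X,Y)\le\frac12\inf\{\dis R\}$ and $d_{GH}(X,Y)\ge\frac12\inf\{\dis R\}$ separately and combine them.

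For the first inequality I would fix an arbitrary correspondence $R\in\cR(X,Y)$, set $r=\frac12\dis R$, and construct a realization on the disjoint union $Z=X\sqcup Y$. On $X\x X$ and $Y\x Y$ the function $\rho$ is taken to be the given metrics, while for $x\in X$ and $y\in Y$ I would set
$$
\rho(x,y)=\inf\bigl\{|xa|+r+|by|:(a,b)\in R\bigr\}.
$$
Since $R$ is a correspondence, every $x\in X$ has a partner $y$ with $(x,y)\in R$, which forces $\rho(x,y)\le r$, and symmetrically every $y\in Y$ lies within $r$ of some $x\in X$; hence both one-sided terms of the Hausdorff distance are at most $r$, so $d_H(X,Y)\le r$. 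Once $\rho$ is known to be a metric restricting isometrically to $X$ and $Y$, this triple is a genuine realization and yields $d_{GH}(X,Y)\le r$; taking the infimum over all $R$ then gives the first inequality.

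The main obstacle is verifying that $\rho$ is a metric that restricts isometrically to $X$ and $Y$. Symmetry and non-negativity are immediate, and since each mixed distance carries the additive summand $r$ one has $\rho(x,y)\ge r$, giving non-degeneracy when $r>0$. That $\rho$ restricts to the original metrics then amounts to the triangle inequalities, which I would split according to how the three points distribute between $X$ and $Y$. The all-in-one-space cases are trivial, and the cases with a single stray point reduce, through the defining infimum, to the triangle inequality in $X$ or in $Y$. The one essential case is a triple $x_1,x_2\in X$, $y\in Y$ requiring $|x_1x_2|\le\rho(x_1,y)+\rho(y,x_2)$ (this is also what rules out a mixed path being shorter than the intrinsic distance in $X$); expanding both mixed distances by near-optimal pairs $(a,b),(a',b')\in R$, bounding $|by|+|b'y|\ge|bb'|$ in $Y$, and then invoking $|bb'|\ge|aa'|-\dis R=|aa'|-2r$ collapses the right-hand side to $|x_1a|+|aa'|+|a'x_2|\ge|x_1x_2|$. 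This is precisely where the distortion bound is consumed. The boundary case $r=0$, where $\rho$ may be only a pseudometric, I would dispose of by running the construction with any $r'>\frac12\dis R$ and letting $r'\downarrow\frac12\dis R$.

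The reverse inequality is more direct. Given any realization $(X',Y',Z)$, identify $X$ and $Y$ with $X'$ and $Y'$ and fix any $r>d_H(X',Y')$. I would define
$$
R=\bigl\{(x,y)\in X\x Y:|x'y'|_Z<r\bigr\}.
$$
The condition $d_H(X',Y')<r$ makes $R$ surjective in both directions, hence a correspondence. For its distortion, any two pairs $(x_1,y_1),(x_2,y_2)\in R$ satisfy, by the triangle inequality in $Z$, both $|x_1x_2|_Z<|y_1y_2|_Z+2r$ and $|y_1y_2|_Z<|x_1x_2|_Z+2r$, so $\dis R\le 2r$. Therefore $\frac12\inf\{\dis R\}\le r$; letting $r\downarrow d_H(X',Y')$ and then taking the infimum over all realizations yields $\frac12\inf\{\dis R\}\le d_{GH}(X,Y)$. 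Combining the two inequalities proves the theorem.
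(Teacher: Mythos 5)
Your proof is correct. The paper itself gives no proof of this theorem, stating it as a well-known fact with details deferred to~\cite{BurBurIva}; your argument --- the gluing metric on $X\sqcup Y$ with cross-distances $\inf\{|xa|+r+|by|:(a,b)\in R\}$ for one inequality, and the correspondence $\{(x,y):|x'y'|_Z<r\}$ extracted from a realization for the other, including the careful treatment of the degenerate case $r=0$ --- is precisely the standard proof found in that reference.
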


It turns out that it suffice to consider correspondences of a special kind only. For arbitrary nonempty sets $X$ and $Y$, a correspondence $R\in\cR(X,Y)$ is called \emph{irreducible\/} if it is a minimal element of the set $\cR(X,Y)$ with respect to the inclusion. The set of all irreducible correspondences between $X$ and $Y$ is denoted by $\cR^0(X,Y)$.

The following result is evident.

\begin{prop}\label{prop:IrredicubleAndDegrees}
A correspondence $R\in\cR(X,Y)$ is irreducible if and only if for any $(x,y)\in R$ it holds
$$
\min\bigl\{\#R(x),\#R^{-1}(y)\bigr\}=1.
$$
\end{prop}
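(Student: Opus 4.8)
The plan is to prove the contrapositive in both directions, reducing everything to the question of when a \emph{single} pair may be deleted from $R$ without destroying the correspondence property. First I would record the trivial but crucial observation that for $(x,y)\in R$ one automatically has $y\in R(x)$ and $x\in R^{-1}(y)$, so $\#R(x)\ge1$ and $\#R^{-1}(y)\ge1$; hence the asserted equality $\min\bigl\{\#R(x),\#R^{-1}(y)\bigr\}=1$ merely says that for every pair of $R$ at least one of $R(x)=\{y\}$ or $R^{-1}(y)=\{x\}$ holds, and its failure at some pair means $\#R(x)\ge2$ \emph{and} $\#R^{-1}(y)\ge2$.

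The technical heart is a removability criterion: for a fixed $(x,y)\in R$, the set $R':=R\sm\{(x,y)\}$ is again a correspondence if and only if $\#R(x)\ge2$ and $\#R^{-1}(y)\ge2$. I would verify this directly against the two defining conditions $R'(X)=Y$ and $R'^{-1}(Y)=X$. Deleting $(x,y)$ can only threaten the coverage of the point $x$ on the domain side and of the point $y$ on the range side; every other point retains whatever witness it had in $R$. Thus $R'^{-1}(Y)=X$ fails exactly when $x$ loses its last partner, i.e.\ when $R(x)=\{y\}$, and $R'(X)=Y$ fails exactly when $y$ loses its last partner, i.e.\ when $R^{-1}(y)=\{x\}$; combining these gives the stated criterion.

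With this criterion the two implications fall out quickly. If the degree condition fails at some $(x,y)\in R$, then by the criterion $R\sm\{(x,y)\}\in\cR(X,Y)$ is a proper sub-correspondence, so $R$ is not minimal and hence not irreducible. Conversely, if $R$ is not irreducible, I would choose a correspondence $R'\subsetneq R$ together with a pair $(x,y)\in R\sm R'$. Since $R'^{-1}(Y)=X$ there is $y'$ with $(x,y')\in R'\ss R$, and $y'\ne y$ because $(x,y)\notin R'$; likewise $R'(X)=Y$ yields $x'\ne x$ with $(x',y)\in R$. Hence $\{y,y'\}\ss R(x)$ and $\{x,x'\}\ss R^{-1}(y)$, so the degree condition fails at $(x,y)$.

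The main obstacle I anticipate is precisely this converse direction: I must extract a single witnessing pair with \emph{both} degrees at least two from the mere existence of a proper sub-correspondence, and the delicate point is verifying that the second image $y'$ and the second pre-image $x'$ supplied by $R'$ are genuinely distinct from $y$ and $x$ respectively --- which is exactly where the choice $(x,y)\in R\sm R'$ is used. The forward direction, by contrast, is routine once the removability criterion is in hand.
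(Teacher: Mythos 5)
Your proof is correct. Note that the paper offers no argument of its own here --- the proposition is introduced with the words ``The following result is evident'' and left unproved --- so there is nothing to diverge from; your removability criterion (deleting $(x,y)$ preserves the correspondence property if and only if $\#R(x)\ge2$ and $\#R^{-1}(y)\ge2$) is precisely the routine verification the authors are leaving to the reader, and both directions, including the delicate choice of $(x,y)\in R\sm R'$ in the converse, are handled correctly.
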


\begin{thm}\label{thm:irreducinle-exists}
Let $X$, $Y$ be arbitrary nonempty sets. Then for every $R\in\cR(X,Y)$ there exists $R^0\in\cR^0(X,Y)$ such that $R^0\ss R$. In particular, $\cR^0(X,Y)\ne\0$.
\end{thm}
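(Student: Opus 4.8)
The plan is to construct $R^0$ explicitly, rather than to extract it as a minimal element by a direct appeal to Zorn's Lemma. This is the point where the main obstacle lives: the family $\{S\in\cR(X,Y):S\ss R\}$ ordered by inclusion is \emph{not} closed under the natural chain operation, since the intersection of a decreasing chain of correspondences contained in $R$ may fail to be a correspondence (a fibre $S(x)$ can stay nonempty all along the chain yet have empty intersection, leaving $x$ uncovered). Hence no minimal element is handed to us for free, and minimality must be engineered by hand. By Proposition~\ref{prop:IrredicubleAndDegrees}, the target is a subcorrespondence $R^0\ss R$ such that every $(x,y)\in R^0$ has at least one of $R^0(x)$, $(R^0)^{-1}(y)$ a singleton.

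First I would fix, using the Axiom of Choice, a selector $f\:X\to Y$ with $f(x)\in R(x)$ for all $x$; the edges $(x,f(x))$ cover all of $X$ and exactly the subset $f(X)\ss Y$. To cover the remainder $Y'=Y\sm f(X)$ I would choose $g\:Y'\to X$ with $g(y)\in R^{-1}(y)$, set $Q=\{(g(y),y):y\in Y'\}$ and $X_g=g(Y')\ss X$. The union $Q\cup\{(x,f(x)):x\in X\}$ is a correspondence inside $R$, but it is typically \emph{not} irreducible, and repairing this is the heart of the argument.

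The key observation is that each $x\in X_g$ is already covered by $Q$, so its $f$-edge is redundant and should be discarded, whereas each $x\in X\sm X_g$ can be kept a leaf by retaining only its $f$-edge. Discarding the $f$-edges over $X_g$ may, however, uncover those $y\in f(X)$ all of whose $f$-preimages lie in $X_g$; denote this set by $Y''$, and for each $y\in Y''$ choose one preimage $x_y\in f^{-1}(y)\ss X_g$, restoring the single edge $(x_y,y)$. The final candidate is
$$
R^0=Q\ \cup\ \{(x,f(x)):x\in X\sm X_g\}\ \cup\ \{(x_y,y):y\in Y''\}.
$$

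It then remains to verify two things. That $R^0$ is a correspondence contained in $R$ is routine bookkeeping: points of $X\sm X_g$ are covered by their $f$-edge, points of $X_g$ and all of $Y'$ by $Q$, points of $f(X)\sm Y''$ by a surviving $f$-edge, and points of $Y''$ by the restored edge. For irreducibility I would check that each surviving edge meets a vertex with singleton fibre: the $Q$-edges end at the leaves $y\in Y'$; the retained $f$-edges begin at the leaves $x\in X\sm X_g$, which carry neither a $Q$-edge nor a restored edge; and each restored edge ends at $y\in Y''$, which by construction has no other incident edge. Thus every $(x,y)\in R^0$ satisfies $\min\{\#R^0(x),\#(R^0)^{-1}(y)\}=1$, and Proposition~\ref{prop:IrredicubleAndDegrees} yields $R^0\in\cR^0(X,Y)$. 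Nonemptiness of $\cR^0(X,Y)$ then follows by applying the construction to $R=X\x Y$.
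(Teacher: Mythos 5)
The paper itself gives no proof of this theorem: it is stated in the preliminaries as a known fact, with the reader implicitly referred to~\cite{ITBor}, so there is nothing in-paper to compare your argument against; it must be judged on its own, and it holds up. Your opening observation is well taken: the naive Zorn's-lemma argument genuinely fails here, since a decreasing chain of correspondences inside $R$ need not have a lower bound (e.g.\ for $X=\{a\}\cup\N$, $Y=\N$, the chain $R_n=\{(a,m):m\ge n\}\cup\{(k,k):k\in\N\}$ consists of correspondences whose intersection no longer covers $a$), so an explicit construction or a transfinite argument is required. Your construction is correct, and the verification goes through exactly as you sketch it: with $Y'=Y\sm f(X)$, $X_g=g(Y')$, $Y''=\big\{y\in f(X):f^{-1}(y)\ss X_g\big\}$, the set $R^0$ covers $X$ (points of $X_g$ via $Q$, the rest via retained $f$-edges) and covers $Y$ (the set $Y'$ via $Q$, the set $Y''$ via restored edges, and $f(X)\sm Y''$ via retained $f$-edges, which exist precisely because such $y$ have a preimage outside $X_g$); moreover every edge of $R^0$ has a singleton fibre at one end: each $y\in Y'$ receives only its one $Q$-edge (retained and restored edges end in $f(X)$), each $x\in X\sm X_g$ emits only its one $f$-edge ($Q$-edges and restored edges start in $X_g$), and each $y\in Y''$ receives only its one restored edge (no retained $f$-edge can end there, as $f^{-1}(y)\ss X_g$). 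Proposition~\ref{prop:IrredicubleAndDegrees} then gives irreducibility, and $R^0\ss R$ is immediate since all edges used are of the form $(x,f(x))$ or $(g(y),y)$. The only cosmetic remark is that nonemptiness of $\cR^0(X,Y)$ needs no special choice of $R$: since $X\x Y\in\cR(X,Y)$, the set $\cR(X,Y)$ is nonempty and the main assertion applies to any of its elements.
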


Theorems~\ref{thm:irreducinle-exists} and~\ref{thm:GH-metri-and-relations} imply

\begin{cor}\label{cor:GH-distance-irreducinle}
For any metric spaces $X$ and $Y$ we have
$$
d_{GH}(X,Y)=\frac12\inf\bigl\{\dis R\mid R\in\cR^0(X,Y)\bigr\}.
$$
\end{cor}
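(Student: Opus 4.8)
The plan is to reduce the statement to the equality of the two infima
$$
\inf\bigl\{\dis R: R\in\cR(X,Y)\bigr\}
\quad\text{and}\quad
\inf\bigl\{\dis R: R\in\cR^0(X,Y)\bigr\},
$$
after which the corollary follows at once by substituting this common value into the formula of Theorem~\ref{thm:GH-metri-and-relations}. Thus the entire task is to compare an infimum taken over all correspondences with the infimum taken over the (generally much smaller) family of irreducible ones.

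One inequality is immediate. Since every irreducible correspondence is in particular a correspondence, we have the inclusion $\cR^0(X,Y)\ss\cR(X,Y)$, and the infimum of a function over a subset can only be larger, so
$$
\inf\bigl\{\dis R: R\in\cR(X,Y)\bigr\}\le\inf\bigl\{\dis R: R\in\cR^0(X,Y)\bigr\}.
$$
For the reverse inequality I would use the fact that the distortion is monotone with respect to inclusion: if $R^0\ss R$, then $\dis R^0\le\dis R$, because the supremum defining $\dis R^0$ is taken over the pairs $\bigl((x,y),(x',y')\bigr)$ with both points in $R^0$, a subcollection of the pairs entering the supremum for $\dis R$. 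Now fix an arbitrary $R\in\cR(X,Y)$. By Theorem~\ref{thm:irreducinle-exists} there exists $R^0\in\cR^0(X,Y)$ with $R^0\ss R$, and by the monotonicity just noted $\dis R^0\le\dis R$. Hence $\inf\bigl\{\dis R': R'\in\cR^0(X,Y)\bigr\}\le\dis R$ for every $R\in\cR(X,Y)$, and taking the infimum over all such $R$ yields
$$
\inf\bigl\{\dis R': R'\in\cR^0(X,Y)\bigr\}\le\inf\bigl\{\dis R: R\in\cR(X,Y)\bigr\}.
$$
Combining the two inequalities gives the desired equality of infima, and the conclusion follows from Theorem~\ref{thm:GH-metri-and-relations}.

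There is no genuinely hard step here; the proof is a short formal argument, which is presumably why the authors call it a corollary. The only point requiring a moment's attention is the monotonicity of distortion under inclusion, and the only substantive input is Theorem~\ref{thm:irreducinle-exists}, which guarantees that one never loses anything by passing to an irreducible sub-correspondence — in particular that $\cR^0(X,Y)$ is nonempty, so that the infimum over it is not vacuously $+\infty$.
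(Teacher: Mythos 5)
Your proof is correct and follows exactly the route the paper intends: the paper derives this corollary directly from Theorems~\ref{thm:irreducinle-exists} and~\ref{thm:GH-metri-and-relations}, and your argument (monotonicity of $\dis$ under inclusion plus existence of an irreducible sub-correspondence) is precisely the standard filling-in of that implication. Nothing is missing.
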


We now give another useful description of irreducible correspondences.

\begin{prop}\label{prop:decompose_inrreducible}
For any nonempty sets $X$, $Y$, and each $R\in\cR^0(X,Y)$, there exist and unique partitions $R_X=\{X_i\}_{i\in I}$ and $R_Y=\{Y_i\}_{i\in I}$ of the sets $X$ and $Y$, respectively, such that $R=\cup_{i\in I}X_i\x Y_i$. Moreover, $R_X=\cup_{y\in Y}\bigl\{R^{-1}(y)\bigr\}$, $R_Y:=\cup_{x\in X}\bigl\{R(x)\bigr\}$,
$$
R=\{X_i\x Y_i\}_{i\in I}=\cup_{(x,y)\in R}\{R^{-1}(y)\x R(x)\},
$$
and for each $i$ it holds $\min\{\#X_i,\#Y_i\}=1$.

Conversely, each set $R=\cup_{i\in I}X_i\x Y_i$, where $\{X_i\}_{i\in I}$ and $\{Y_i\}_{i\in I}$ are partitions of nonempty sets $X$ and $Y$, respectively, such that for each $i$ it holds $\min\{\#X_i,\#Y_i\}=1$, is an irreducible correspondence between $X$ and $Y$.
\end{prop}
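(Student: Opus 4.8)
The plan is to construct the partitions directly from the fibres of $R$, feeding everything through Proposition~\ref{prop:IrredicubleAndDegrees}, which is the only structural fact available. For each $(x,y)\in R$ I would look at the ``rectangle'' $R^{-1}(y)\x R(x)$, since the statement itself predicts these to be the blocks $X_i\x Y_i$. Two consequences of Proposition~\ref{prop:IrredicubleAndDegrees} are immediate and will be used repeatedly: for every $(x,y)\in R$ one has $\min\{\#R^{-1}(y),\#R(x)\}=1$, and $R^{-1}(y)\x R(x)\ss R$. The inclusion is a two-line case check: if $R(x)=\{y\}$ the rectangle is $R^{-1}(y)\x\{y\}$, all of whose points lie in $R$ by definition of the preimage, and the case $R^{-1}(y)=\{x\}$ is symmetric.

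Next I would verify that $R_X=\{R^{-1}(y):y\in Y\}$ is a partition of $X$ and, symmetrically, that $R_Y=\{R(x):x\in X\}$ is a partition of $Y$. Covering and non-emptiness follow from $R(X)=Y$ and $R^{-1}(Y)=X$. The one real point is disjointness: if $x\in R^{-1}(y)\cap R^{-1}(y')$ with $y\ne y'$, then $\#R(x)\ge2$, so Proposition~\ref{prop:IrredicubleAndDegrees} forces $\#R^{-1}(y)=\#R^{-1}(y')=1$ and hence $R^{-1}(y)=R^{-1}(y')=\{x\}$; thus any two preimages are equal or disjoint.

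The crux is to glue these two partitions into one product decomposition, and I expect this matching to be the main obstacle. The key lemma is that two rectangles which meet must coincide: if $(a,b)$ lies in both $R^{-1}(y_1)\x R(x_1)$ and $R^{-1}(y_2)\x R(x_2)$, then $a\in R^{-1}(y_1)\cap R^{-1}(y_2)$ and $b\in R(x_1)\cap R(x_2)$, so the disjointness of the previous step (and its mirror image for $R_Y$) gives $R^{-1}(y_1)=R^{-1}(y_2)$ and $R(x_1)=R(x_2)$. Since every $(x,y)\in R$ sits in its own rectangle, the distinct rectangles are pairwise disjoint and cover $R$; one further checks that distinct rectangles have distinct first projections (equal first projections can be shown to force the rectangles to meet, hence coincide), so that indexing the rectangles by a set $I$ and writing the $i$-th as $X_i\x Y_i$ produces $R=\cup_{i\in I}X_i\x Y_i$ with $\{X_i\}=R_X$, $\{Y_i\}=R_Y$, and $\min\{\#X_i,\#Y_i\}=1$. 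The delicate part here is exactly the bookkeeping that keeps the first- and second-projection partitions matched, and it is where the singleton condition is spent.

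Finally, uniqueness and the converse are short. For uniqueness, if $R=\cup_{i\in I}X_i\x Y_i$ for any pair of partitions, then for $y\in Y_i$ the disjointness of the $Y_k$ gives $(x,y)\in R$ precisely when $x\in X_i$, i.e.\ $R^{-1}(y)=X_i$; hence $\{X_i\}$ is forced to be $R_X$, symmetrically $\{Y_i\}=R_Y$, and the pairing $X_i\leftrightarrow Y_i$ is pinned down by $R^{-1}(y)=X_i$ for $y\in Y_i$. For the converse, given partitions with $\min\{\#X_i,\#Y_i\}=1$, the set $R=\cup_i X_i\x Y_i$ satisfies $R(x)=Y_i$ and $R^{-1}(y)=X_i$ whenever $x\in X_i$ and $y\in Y_i$, so $R(X)=Y$ and $R^{-1}(Y)=X$ make $R$ a correspondence, while $\min\{\#R(x),\#R^{-1}(y)\}=\min\{\#X_i,\#Y_i\}=1$ yields irreducibility through Proposition~\ref{prop:IrredicubleAndDegrees}.
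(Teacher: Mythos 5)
Your proof is correct and complete. One thing to be aware of: the paper itself states Proposition~\ref{prop:decompose_inrreducible} \emph{without} proof --- it sits in the preliminaries on the Gromov--Hausdorff distance, with details deferred to the cited literature --- so there is no in-paper argument to compare yours against; your write-up supplies the missing proof. The route you take is the natural one and every claim of the statement is covered: the two consequences you extract from Proposition~\ref{prop:IrredicubleAndDegrees} (the singleton condition and the inclusion $R^{-1}(y)\x R(x)\ss R$, via the two-case check) give that the rectangles lie in $R$; the disjointness argument for the fibre families $R_X$ and $R_Y$ is exactly where irreducibility is spent; the ``meeting rectangles coincide'' lemma plus your observation that equal first projections force the rectangles to meet handles the bookkeeping that matches the two partitions into one product decomposition, yielding $R=\cup_{(x,y)\in R}R^{-1}(y)\x R(x)$ with the blocks being precisely the members of $R_X$ and $R_Y$; and the fibre computation $R^{-1}(y)=X_i$ for $y\in Y_i$ (which, as you correctly note, needs no singleton hypothesis) settles uniqueness, while the converse direction follows by reading Proposition~\ref{prop:IrredicubleAndDegrees} in the other direction. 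No gaps.
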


Let $X$ be an arbitrary set consisting of more than one point, and $m$ a cardinal number, $2\le m\le\# X$. By $\cD_m(X)$ we denote the family of all possible partitions of the set $X$ into $m$ nonempty subsets.

Now let $X$ be a metric space. Then for each $D=\{X_i\}_{i\in I}\in\cD_m(X)$ we put
$$
\diam D=\sup_{i\in I}\diam X_i.
$$
Further, for any nonempty $A,B\ss X$, we put $|AB|=\inf\bigl\{|ab|:(a,b)\in A\x B\bigr\}$, and  $|AB|':=\sup\bigl\{|ab|:(a,b)\in A\x B\bigr\}$. Also notice that $|X_iX_i|=0$, $|X_iX_i|'=\diam X_i$, and hence, $\diam D=\sup_{i\in I}|X_iX_i|'$.

The next result follows easily from the definition of distortion and Proposition~\ref{prop:decompose_inrreducible}.

\begin{prop}\label{prop:disRforPartition}
Let $X$ and $Y$ be arbitrary metric spaces, $D_X=\{X_i\}_{i\in I}$, $D_Y=\{Y_i\}_{i\in I}$, $\#I\ge2$, be some partitions of the spaces $X$ and $Y$, respectively, and $R=\cup_{i\in I}X_i\x Y_i\in\cR(X,Y)$. Then
\begin{multline*}
\dis R=\sup\bigl\{|X_iX_j|'-|Y_iY_j|,\,|Y_iY_j|'-|X_iX_j|: i,j\in I\bigr\}=\\
=\sup\bigl\{\diam D_X,\,\diam D_Y,\,|X_iX_j|'-|Y_iY_j|,\,|Y_iY_j|'-|X_iX_j|: i,j\in I,\, i\ne j\bigr\}.
\end{multline*}
\end{prop}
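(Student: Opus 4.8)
The plan is to unfold the definition of distortion and exploit the product structure of $R$ (as in Proposition~\ref{prop:decompose_inrreducible}). Recall that
$$
\dis R=\sup\Bigl\{\bigl||xx'|-|yy'|\bigr|:(x,y),(x',y')\in R\Bigr\}.
$$
Since $\{X_i\}_{i\in I}$ and $\{Y_i\}_{i\in I}$ are partitions, each pair $(x,y)\in R$ lies in exactly one block $X_i\x Y_i$, and any second pair $(x',y')\in R$ lies in exactly one block $X_j\x Y_j$. Hence the supremum above can be reorganized as a supremum, over all ordered pairs $(i,j)\in I\x I$, of $\bigl||xx'|-|yy'|\bigr|$ taken over $x\in X_i$, $x'\in X_j$, $y\in Y_i$, $y'\in Y_j$.

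First I would drop the absolute value via $\bigl||xx'|-|yy'|\bigr|=\max\bigl(|xx'|-|yy'|,\,|yy'|-|xx'|\bigr)$, turning $\dis R$ into the joint supremum of the two families $|xx'|-|yy'|$ and $|yy'|-|xx'|$. The crucial observation is that, with $i,j$ fixed, the four points vary independently: $x,x'$ range over $X_i,X_j$ while $y,y'$ range over $Y_i,Y_j$, with no coupling. Applying the elementary identity $\sup_{u,v}\bigl(f(u)-g(v)\bigr)=\sup_u f(u)-\inf_v g(v)$ then gives
$$
\sup\bigl\{|xx'|-|yy'|:x\in X_i,\,x'\in X_j,\,y\in Y_i,\,y'\in Y_j\bigr\}=|X_iX_j|'-|Y_iY_j|,
$$
and symmetrically the second family yields $|Y_iY_j|'-|X_iX_j|$. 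Taking the supremum over all $i,j\in I$ produces the first claimed equality.

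For the second equality I would split the index pairs into the diagonal $i=j$ and the off-diagonal $i\ne j$. On the diagonal, using the observations recorded just before the statement, $|X_iX_i|'=\diam X_i$ and $|Y_iY_i|=0$, so $|X_iX_i|'-|Y_iY_i|=\diam X_i$, and likewise $|Y_iY_i|'-|X_iX_i|=\diam Y_i$; their suprema over $i\in I$ are precisely $\diam D_X$ and $\diam D_Y$. Merging these two diagonal contributions with the remaining off-diagonal terms gives the second formula.

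I expect the only delicate point to be the decoupling step, i.e.\ the passage from the four-variable supremum to $\sup$ minus $\inf$. This requires noting that each $|yy'|$ and each $|xx'|$ is finite (the spaces carry genuine metrics), so that $|Y_iY_j|$ and $|X_iX_j|$ are finite infima and no $\infty-\infty$ can occur even when some $|X_iX_j|'$, $|Y_iY_j|'$, or diameter is infinite. Everything else is routine bookkeeping over the partition blocks, which is why the statement "follows easily" once the representation $R=\cup_{i\in I}X_i\x Y_i$ is available.
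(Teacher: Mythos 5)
Your proof is correct and follows exactly the route the paper intends: the paper gives no explicit proof, saying only that the result ``follows easily from the definition of distortion and Proposition~\ref{prop:decompose_inrreducible}'', and your argument---unfolding $\dis R$ over the blocks $X_i\x Y_i$, decoupling the four-variable supremum into $|X_iX_j|'-|Y_iY_j|$ and $|Y_iY_j|'-|X_iX_j|$, then separating diagonal from off-diagonal terms---is precisely that intended fill-in. Your attention to the finiteness of the infima $|X_iX_j|$, $|Y_iY_j|$ (ruling out $\infty-\infty$) is a worthwhile detail the paper leaves implicit.
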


Here we list several simple cases of exact calculation and estimate of the Gromov--Hausdorff distance.

By $\D_1$ we denote a one-point metric space.

\begin{examp}\label{examp:GH_simple}
For any metric space $X$ we have
$$
d_{GH}(\D_1,X)=\frac12\diam X.
$$
\end{examp}

\begin{examp}\label{examp:dGHbelowEstimate}
Let $X$ and $Y$ be some metric spaces, and the diameter of one of them is finite. Then
$$
d_{GH}(X,Y)\ge\frac12|\diam X-\diam Y|.
$$
\end{examp}

\begin{examp}\label{examp:GH-ineq-max-Diam-X-Y}
Let $X$ and $Y$ be some metric spaces, then
$$
d_{GH}(X,Y)\le\frac12\max\{\diam X,\diam Y\},
$$
in particular, if $X$ and $Y$ are bounded metric spaces, then $d_{GH}(X,Y)<\infty$.
\end{examp}

For an arbitrary metric space $X$ and a real $\l>0$, by $\l X$ we denote the metric space obtained from $X$ by multiplying all distances by $\l$. For $\l=0$ we set $\l X=\D_1$.

\begin{examp}\label{examp:dilatation-of-the-same-X}
For any bounded metric space $X$ and any $\l\ge0$, $\mu\ge0$, we have $d_{GH}(\l X,\mu X)=\frac12|\l-\mu|\diam X$, in particularly, for any $0\le a<b$ the curve $\g(t):=t\,X$, $t\in[a,b]$, is shortest.
\end{examp}

\begin{examp}\label{examp:dilatation-and-GH}
Let $X$ and $Y$ be metric spaces, then for any $\l>0$ we have $d_{GH}(\l X,\l Y)=\l\,d_{GH}(X,Y)$. If, in addition, $d_{GH}(X,Y)<\infty$, then the equality holds for all $\l\ge0$.
\end{examp}

\section{Preliminaries: Borsuk--Lusternik--Schnirelmann Problem}
\markright{\thesection.~Preliminaries: Lusternik--Schnirelmann Problem}\label{sec:BoLuSch}
Let $S^n$ be the standard sphere in $\R^{n+1}$. In~1930, L.~Lusternik and L.~Schnirelmann~\cite{LustSch} proved that every closed cover of $S^n$ by means of $(n+1)$ sets $C_1,\ldots,C_{n+1}$ contains $C_i$ with opposite points $x$ and $-x$, i.e. the sphere $S^n$ cannot be partitioned into $m\le(n+1)$ subsets of smaller diameter. A bit later, in 1933, a Polish mathematician K.~Borsuk asked the following general question: Into how many parts should an arbitrary subset of Euclidean space be partitioned in order to obtain pieces of a smaller diameter? He put forward the following famous conjecture: Any bounded non-one-point subset of $\R^n$ can be partitioned into at most $n+1$ subsets, each of which has a smaller diameter than the original subset. K.~Borsuk himself proved this statement for $n=2$ and for a ball and sphere in $3$-dimensional space, \cite{BorCong} and~\cite{Borsuk}. Next, the conjecture was proved by J.~Perkal (1947) and independently by H.\,G.~Eggleston (1955) for $n=3$, then in 1946 by H.~Hadwiger~\cite{Hadw}, \cite{Hadw2} for convex subsets with smooth boundaries, then for centrally symmetric bodies by A.\,S.~Riesling (1971), and after that almost everyone believed that this was true. However, in 1993 the conjecture was suddenly disproved in general case  by J.~Kahn, and G.~Kalai, see~\cite{KahnKalai}. They constructed a counterexample in dimension $n=1325$, and also proved that the conjecture is not valid for all $n>2014$.  This estimate was consistently improved by Raigorodskii, $n\ge561$, Hinrichs and Richter, $n\ge298$, Bondarenko, $n\ge 65$, and Jenrich, $n\ge 64$, see details in a review~\cite{Raig}. Notice that all the examples are finite subsets of the corresponding spaces, and the best known results of Bondarenko~\cite{Bond} and Jenrich~\cite{Jenr} are the $2$-distance subsets of the unit sphere.

In~\cite{ITBor} the following generalized problem is suggested. Let $X$ be a bounded metric space, and $D=\{X_i\}_{i\in I}$ a partition of $X$. We say that  $D$ is a partition of $X$ into subsets having \emph{strictly smaller diameters}, if there exists $\e>0$ such that $\diam X_i\le\diam X-\e$ for all $i\in I$.

The \emph{Generalized Borsuk Problem\/}: Let $X$ be a bounded metric space and $m$ be a cardinal number such that $2\le m\le\#X$. Does there exist a partition of cardinality $m$ of the space $X$ into subsets of strictly smaller diameter?

It turns out, see~\cite{ITBor}, that the solution to this problem can be given in terms of the Gromov--Hausdorff distance. Denote by $\D_m$ a single-distance metric space of cardinality $m$, all nonzero distances of which are equal to $1$.

\begin{thm}\label{thm:Borsuk}
Let $X$ be an arbitrary bounded metric space, $m$ a cardinal number such that $2\le m\le\#X$, and $\l$, $0<\l<\diam X$,  a real number. The space $X$ can be partitioned into $m$ subsets of strictly smaller diameter if and only if $2d_{GH}(\l\D_m,X)<\diam X$. Moreover, if there is no such partition, then $2d_{GH}(\l\D_m,X)=\diam X$.
\end{thm}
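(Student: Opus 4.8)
The plan is to translate everything into the language of correspondences and their distortions, via Corollary~\ref{cor:GH-distance-irreducinle} and the structural Proposition~\ref{prop:decompose_inrreducible}, and to set aside one universal inequality for the ``moreover'' claim. Write $Y=\l\D_m$, so that $\diam Y=\l<\diam X$ and $\#Y=m$. First I would record the universal upper bound $2d_{GH}(Y,X)\le\diam X$: by Example~\ref{examp:GH-ineq-max-Diam-X-Y} one has $d_{GH}(Y,X)\le\frac12\max\{\diam Y,\diam X\}=\frac12\diam X$. This bound is exactly what upgrades the equivalence into the final equality.

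For the implication ``partition exists $\imply 2d_{GH}(Y,X)<\diam X$'', suppose $\{X_i\}_{i\in I}$ is a partition with $\#I=m$ and $\diam X_i\le\diam X-\e$ for all $i$ and some $\e>0$. Fix a bijection between $I$ and the points of $Y=\{y_i\}_{i\in I}$ and form the irreducible correspondence $R=\cup_{i\in I}\{y_i\}\x X_i$. Then $\diam D_Y=0$ and $|y_iy_j|=\l$ for $i\ne j$, so Proposition~\ref{prop:disRforPartition} together with $|X_iX_j|'\le\diam X$ and $|X_iX_j|\ge0$ gives $\dis R\le\max\{\diam X-\e,\ \diam X-\l,\ \l\}$. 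Each of the three terms is strictly below $\diam X$ because $\e>0$, $\l>0$, and $\l<\diam X$; hence $2d_{GH}(Y,X)\le\dis R<\diam X$.

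For the converse ``$2d_{GH}(Y,X)<\diam X\imply$ partition exists'', Corollary~\ref{cor:GH-distance-irreducinle} supplies $R\in\cR^0(Y,X)$ with $\dis R<\diam X$. Proposition~\ref{prop:decompose_inrreducible} decomposes $R=\cup_{i\in I}Y_i\x X_i$ with $\{X_i\}_{i\in I}$ a partition of $X$ and $\#I\le m$; by Proposition~\ref{prop:disRforPartition} every $\diam X_i\le\diam D_X\le\dis R$, so with $\e:=\diam X-\dis R>0$ all parts satisfy $\diam X_i\le\diam X-\e$. The only remaining issue is that $\#I$ may be strictly smaller than $m$, whereas the Generalized Borsuk Problem requires exactly $m$ parts. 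I would resolve this by refinement: since subsets do not increase diameter, any refinement of $\{X_i\}$ keeps all diameters $\le\diam X-\e$, and because $\#I\le m\le\#X$ the parts can be split into exactly $m$ nonempty pieces.

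The main obstacle I anticipate is precisely this refinement when $m$ is an infinite cardinal (for finite $m$ one just splits off one point at a time until $m$ parts are reached). If $\#I=m$ there is nothing to do, so assume $\#I<m$. For infinite $m$ I would argue by cardinal arithmetic: it suffices to find cardinals $m_i$ with $1\le m_i\le\#X_i$ and $\sum_{i\in I}m_i=m$, after which partitioning each $X_i$ into $m_i$ nonempty pieces (via a surjection onto a set of size $m_i$) yields the required $m$-part refinement. Such $m_i$ exist because $\sum_{i\in I}\#X_i=\#X\ge m>\#I$: if some $\#X_{i_0}\ge m$, take $m_{i_0}=m$ and all other $m_i=1$, so that $\sum m_i=m+(\#I-1)=m$; otherwise all $\#X_i<m$, which together with $\#I<m$ and $\#X\ge m$ forces $\sup_i\#X_i=m$, and a cofinal subfamily $I'$ with $\sum_{i\in I'}\#X_i=m$ lets me set $m_i=\#X_i$ on $I'$ and $m_i=1$ elsewhere. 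Finally, the ``moreover'' statement follows formally: by the proved equivalence, the absence of a partition yields $2d_{GH}(Y,X)\ge\diam X$, which with the universal bound $2d_{GH}(Y,X)\le\diam X$ forces equality.
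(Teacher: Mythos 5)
Your proof is correct. Note that this paper states Theorem~\ref{thm:Borsuk} without proof, importing it from~\cite{ITBor}, so there is no in-paper argument to compare against; your route via irreducible correspondences, Proposition~\ref{prop:disRforPartition}, and the universal bound of Example~\ref{examp:GH-ineq-max-Diam-X-Y} is exactly what the paper's preliminaries are set up for and follows the same lines as the cited source. The one genuinely delicate point --- refining a partition with $\#I<m$ parts into exactly $m$ parts of still strictly smaller diameter when $m$ is infinite --- is the step most blind attempts would gloss over, and your cardinal-arithmetic case analysis (either some $\#X_{i_0}\ge m$, or else all parts have cardinality $<m$, which together with $\#I<m$ forces $\#X=m$ and lets you take $m_i=\#X_i$) closes it correctly.
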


For a bounded metric space $X$, $\#X\ge 2$, denote by $\b(X)$ the infimum of cardinal numbers $n$ such that $X$ can be partitioned into $n$ subsets of strictly smaller diameter. We will call $\b(X)$ the \emph{Borsuk number of $X$}. It is clear that $\b(X)\le\#X$.

\section{Gromov--Hausdorff Distance and Borsuk Number}
\markright{\thesection.~Gromov--Hausdorff Distance and Borsuk Number}
It turns out that the Borsuk number can sometimes help calculate the Gromov--Hausdorff distance.

\begin{thm}\label{thm:dist-n-simplex-bigger-dim}
Let $X$ and $Y$ be bounded metric spaces. Suppose that $\#X<\b(Y)$ and $\diam X\le\diam Y$. Then
$$
2d_{GH}(X,Y)=\diam Y.
$$
\end{thm}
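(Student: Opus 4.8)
The plan is to establish the two inequalities $2d_{GH}(X,Y)\le\diam Y$ and $2d_{GH}(X,Y)\ge\diam Y$ separately. The upper bound is immediate from Example~\ref{examp:GH-ineq-max-Diam-X-Y}, since $\diam X\le\diam Y$ gives $\max\{\diam X,\diam Y\}=\diam Y$, whence $2d_{GH}(X,Y)\le\diam Y$. All the content therefore lies in the lower bound, and the role of the hypothesis $\#X<\b(Y)$ is precisely to force this reverse inequality.

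For the lower bound I would argue via correspondences, using Corollary~\ref{cor:GH-distance-irreducinle} to reduce to irreducible correspondences $R\in\cR^0(X,Y)$ and Proposition~\ref{prop:decompose_inrreducible} to write $R=\cup_{i\in I}X_i\x Y_i$ for unique partitions $\{X_i\}$ of $X$ and $\{Y_i\}$ of $Y$ with $\min\{\#X_i,\#Y_i\}=1$ for each $i$. The key observation is that since $\{X_i\}_{i\in I}$ is a partition of $X$, we have $\#I\le\#X<\b(Y)$. By the definition of the Borsuk number $\b(Y)$ as the infimum of cardinalities of partitions of $Y$ into subsets of strictly smaller diameter, no partition of $Y$ into fewer than $\b(Y)$ pieces can have all pieces of strictly smaller diameter. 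Hence the partition $\{Y_i\}_{i\in I}$, having at most $\#I<\b(Y)$ parts, cannot have every part of strictly smaller diameter; that is, $\diam D_Y=\sup_{i\in I}\diam Y_i$ must equal $\diam Y$ (I should check this uses the ``strictly smaller'' formulation of Theorem~\ref{thm:Borsuk}, i.e. that failing the Borsuk property means the supremum of the part-diameters is not bounded away from $\diam Y$, and conclude $\diam D_Y=\diam Y$).

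Once I know $\diam D_Y=\diam Y$, I invoke Proposition~\ref{prop:disRforPartition}, which gives
$$
\dis R=\sup\bigl\{\diam D_X,\,\diam D_Y,\,|X_iX_j|'-|Y_iY_j|,\,|Y_iY_j|'-|X_iX_j|:i,j\in I,\,i\ne j\bigr\}\ge\diam D_Y=\diam Y.
$$
Since this holds for every irreducible correspondence $R$, taking the infimum and applying Corollary~\ref{cor:GH-distance-irreducinle} yields $2d_{GH}(X,Y)\ge\diam Y$, completing the proof.

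The main obstacle I anticipate is the careful handling of the ``strictly smaller diameter'' definition and the supremum in the cardinal/infinite setting. Specifically, the Borsuk number is defined via partitions into parts of \emph{strictly} smaller diameter (there exists $\e>0$ with $\diam Y_i\le\diam Y-\e$ for all $i$), so the failure of the Borsuk property for $\{Y_i\}_{i\in I}$ tells me only that no such uniform $\e$ exists. I must argue that this forces $\sup_{i\in I}\diam Y_i=\diam Y$ exactly, rather than merely bounding it; this should follow since $\diam Y_i\le\diam Y$ always holds, so the supremum is at most $\diam Y$, and if it were strictly less than $\diam Y$ it would furnish such an $\e$, contradicting $\#I<\b(Y)$. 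I would also take care that the boundedness of $Y$ guarantees $\diam Y<\infty$ so these comparisons are legitimate.
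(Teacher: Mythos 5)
Your proposal is correct and takes essentially the same route as the paper: the paper's proof also passes to an irreducible correspondence, observes that the induced partition $R_Y$ of $Y$ has at most $\#X<\b(Y)$ parts and hence $\diam R_Y=\diam Y$, applies Proposition~\ref{prop:disRforPartition} for the lower bound, and cites Example~\ref{examp:GH-ineq-max-Diam-X-Y} for the upper bound. Your more careful treatment of the ``strictly smaller diameter'' condition (producing the uniform $\e$ from $\sup_i\diam Y_i<\diam Y$) is a detail the paper leaves implicit, but it is the same argument.
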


\begin{proof}
Let $R\in\cR^0(X,Y)$ be an arbitrary irreducible correspondence. Consider the corresponding partition $R_Y=\big\{R(x)\big\}_{x\in X}$. Since $\#R_Y=\#X<\b(Y)$, then $X$ can not be partitioned into $\#X$ parts of smaller diameter, and hence $\diam R_Y=\diam Y$. Therefore, by Proposition~\ref{prop:disRforPartition} we have $2d_{GH}(X,Y)\ge \diam Y$. On the other hand,
$$
2d_{GH}(X,Y)\le\max\{\diam X,\diam Y\}=\diam Y,
$$
see Example~\ref{examp:GH-ineq-max-Diam-X-Y}. Theorem is proved.
\end{proof}

Let us pass to examples. The simplest one is a single-distance space $\l\D_m$ of cardinality $m$, whose Borsuk number equals $m$. The following equality had been proved in~\cite{GrigIvaTuzSimpDist}.

\begin{cor}\label{cor:simplex}
Let $X$ be a bounded metric space. Then for any real $\l\ge \diam X$ and any cardinal number $m>\#X$ the equality $d_{GH}(X,\l\D_m)=\l$ holds.
\end{cor}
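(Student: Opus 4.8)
The goal is to deduce Corollary~\ref{cor:simplex} from Theorem~\ref{thm:dist-n-simplex-bigger-dim}. The plan is to verify that the hypotheses of the theorem hold with the roles $X\mapsto X$ and $Y\mapsto\l\D_m$, modulo a rescaling argument to upgrade the theorem's conclusion (which only yields $2d_{GH}=\diam(\l\D_m)$) into the sharper value $\l$ claimed here. So I would first dispose of a degenerate case and then reduce the main computation to the theorem.

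The first step is to record the two relevant invariants of the target space $\l\D_m$. Since $\D_m$ is a single-distance space of cardinality $m$ with all nonzero distances equal to $1$, we have $\diam(\l\D_m)=\l$ for $\l>0$, and more importantly $\b(\l\D_m)=m$: indeed, any partition of $\l\D_m$ into fewer than $m$ parts must place two distinct points in a common part, and that part then already has diameter $\l=\diam(\l\D_m)$, so no partition into $<m$ pieces can strictly decrease the diameter, while the partition into singletons (of cardinality $m$) trivially does. This is exactly the observation flagged in the text just before the corollary. Thus $\b(\l\D_m)=m$.

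Next I would check the two hypotheses of Theorem~\ref{thm:dist-n-simplex-bigger-dim}, applied with the first space $X$ and the second space $Y:=\l\D_m$. The cardinality condition $\#X<\b(Y)$ becomes $\#X<m$, which is precisely the assumed $m>\#X$. The diameter condition $\diam X\le\diam Y$ becomes $\diam X\le\l$, which is precisely the assumed $\l\ge\diam X$. Hence the theorem applies and gives
$$
2d_{GH}(X,\l\D_m)=\diam(\l\D_m)=\l,
$$
so $d_{GH}(X,\l\D_m)=\l/2$. This, however, is off by a factor of $2$ from the stated equality $d_{GH}(X,\l\D_m)=\l$, so either the corollary's normalization differs or a rescaling is intended; I would reconcile this by noting that the ``natural'' target may be $\l\D_m$ with diameter $2\l$ rather than $\l$, or equivalently by invoking Example~\ref{examp:dilatation-and-GH} to absorb the scalar. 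The honest resolution is to trust the theorem's output $d_{GH}(X,\l\D_m)=\frac12\diam(\l\D_m)$ and match it against the stated convention for $\D_m$.

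The only genuine subtlety I anticipate is the boundary behavior at the equality cases. When $\diam X=\l$ we use the non-strict inequality $\diam X\le\diam Y$ in the theorem, which is permitted, so no difficulty arises there. The edge case $\#X$ finite versus infinite is harmless since the cardinal inequality $\#X<m$ is all that is needed. Finally one should confirm $\l>0$ is guaranteed: since $m\ge 2$ forces $X$ to be comparable to a nontrivial space and $\l\ge\diam X\ge 0$, the only excluded case is $\l=0$, where $\l\D_m=\D_1$ collapses to a point; I would treat this separately via Example~\ref{examp:GH_simple}, but as stated the corollary implicitly assumes $\diam X>0$ so that $\l>0$. Thus the main work is purely the verification that $\b(\l\D_m)=m$; everything else is a direct substitution into Theorem~\ref{thm:dist-n-simplex-bigger-dim}.
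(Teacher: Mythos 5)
Your route is the paper's own route: the corollary is presented as the simplest instance of Theorem~\ref{thm:dist-n-simplex-bigger-dim}, applied with $Y=\l\D_m$, using exactly the two facts you verify, namely $\diam(\l\D_m)=\l$ and $\b(\l\D_m)=m$ (the paper asserts the latter in the sentence preceding the corollary and refers to~\cite{GrigIvaTuzSimpDist} for the equality itself). Your computation of the Borsuk number and your check of the two hypotheses are correct, so your conclusion $2d_{GH}(X,\l\D_m)=\diam(\l\D_m)=\l$ is exactly the intended argument.

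The factor-of-two discrepancy you noticed is real, and you should resolve it decisively rather than hedge: the printed equality $d_{GH}(X,\l\D_m)=\l$ is a typo, and the correct statement is the one you derived, $2d_{GH}(X,\l\D_m)=\l$, i.e.\ $d_{GH}(X,\l\D_m)=\l/2$. Indeed, for $\l>0$ the printed version cannot hold, since Example~\ref{examp:GH-ineq-max-Diam-X-Y} gives the unconditional bound $d_{GH}(X,\l\D_m)\le\frac12\max\{\diam X,\l\}=\l/2$; already the special case $X=\D_1$, $\l=1$ contradicts Example~\ref{examp:GH_simple}. Your alternative speculation that the target might be meant to carry diameter $2\l$ is ruled out by the paper's definition of $\D_m$ (all nonzero distances equal $1$), and no appeal to Example~\ref{examp:dilatation-and-GH} can rescale away the discrepancy, since both sides of the claimed equality scale linearly in $\l$. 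One last small correction: the corollary does not implicitly assume $\diam X>0$; if $\l=0$, then $\l\ge\diam X$ forces $\diam X=0$, so $X$ and $\l\D_m=\D_1$ are one-point spaces and $2d_{GH}(X,\l\D_m)=0=\l$ holds trivially, so the corrected statement needs no positivity assumption.
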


The next class of examples can be obtained from Borsuk--Ulam and Lusternik--Schnirelmann Theorems. Borsuk proved his conjecture in the case of a sphere $S^2$ on the basis of the Borsuk--Ulam theorem, which states that for any continuous mapping $f\:S^2\to\R^2$ there exists $x\in S^2$ such that $f( x)=f(-x)$. Later, P.~Bacon~\cite{Bacon} showed that these two results are equivalent for a fairly wide class of spaces, namely, the following assertion is true.

\begin{thm}[Bacon]\label{thm:Bacon}
Let $X$ be a normal topological space with free continuous involution $A\:X\to X$, and $n$ be a positive integer. The following statements are equivalent.
\begin{itemize}
\item For any continuous mapping $f\:X\to\R^n$, there exists $x\in X$ such that $f\big(A(x)\big)=f(x)$.
\item For any covering $\{C_1,\ldots,C_{n+1}\}$ of the space $X$ by closed sets, there is at least one containing both $x$ and $A(x)$.
\end{itemize}
\end{thm}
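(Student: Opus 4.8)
The plan is to prove both implications by contraposition, shuttling between the two formulations through the antisymmetrization $g(x)=f(x)-f\big(A(x)\big)$ and an equivariant normalisation onto a sphere. I shall write $-x$ for the partner $A(x)$ of a point, and call a pair $\big(x,A(x)\big)$ \emph{antipodal}; freeness of the involution gives $A\c A=\id$ with no fixed points, which is exactly what makes $g$ equivariant in the sense $g\big(A(x)\big)=-g(x)$ and prevents spurious zeros.

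First I would treat $(2)\imply(1)$ in contrapositive form. Suppose some continuous $f\:X\to\R^n$ satisfies $f\big(A(x)\big)\ne f(x)$ for every $x$. Then $g(x)=f(x)-f\big(A(x)\big)$ is continuous, satisfies $g\big(A(x)\big)=-g(x)$, and is nowhere zero, so $h(x):=g(x)/|g(x)|$ is a continuous map $h\:X\to S^{n-1}$ with $h\big(A(x)\big)=-h(x)$. The crux is a geometric lemma: $S^{n-1}$ admits a covering by $n+1$ closed sets, none containing an antipodal pair. I would prove it using unit vectors $v_1,\dots,v_{n+1}$ of a regular simplex, for which $\sum_i v_i=0$; the function $x\mapsto\max_i\<x,v_i\>$ is continuous and strictly positive on $S^{n-1}$, because $\sum_i\<x,v_i\>=\<x,\sum_i v_i\>=\<x,0\>=0$ forces some summand to be positive whenever $x\ne0$, so it attains a positive minimum $c_0>0$ on the compact sphere. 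The caps $D_i:=\{x\in S^{n-1}:\<x,v_i\>\ge c_0\}$ are closed, cover $S^{n-1}$ by the definition of $c_0$, and contain no antipodal pair since $\<x,v_i\>+\<-x,v_i\>=0$ cannot hold with both terms $\ge c_0>0$. Pulling this cover back, $C_i:=h^{-1}(D_i)$ are closed and cover $X$; if some $C_i$ held both $x$ and $A(x)$, then $h(x)$ and $h\big(A(x)\big)=-h(x)$ would both lie in $D_i$, a contradiction, so this closed $(n+1)$-cover violates $(2)$.

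For $(1)\imply(2)$ in contrapositive form I would start from a closed cover $\{C_1,\dots,C_{n+1}\}$ in which no $C_i$ contains an antipodal pair, i.e.\ $C_i$ is disjoint from its image $A(C_i)$; since $A$ is a homeomorphism, both are closed, so normality and Urysohn's lemma furnish continuous $\v_i\:X\to[-1,1]$ with $\v_i=-1$ on $C_i$ and $\v_i=1$ on $A(C_i)$. Passing to the antisymmetric part $\psi_i(x):=\tfrac12\big(\v_i(x)-\v_i\big(A(x)\big)\big)$ yields equivariant functions $\psi_i\big(A(x)\big)=-\psi_i(x)$ with $\psi_i=-1$ still on $C_i$. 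Then $f:=(\psi_1-\psi_{n+1},\dots,\psi_n-\psi_{n+1})\:X\to\R^n$ is equivariant, so $f\big(A(x)\big)=f(x)$ would force $f(x)=0$, hence all $\psi_i(x)$ equal; but choosing $j$ with $x\in C_j$ gives $\psi_j(x)=-1$, while choosing $k$ with $A(x)\in C_k$ gives $\psi_k(x)=+1$, a contradiction. Thus $f$ has no antipodal coincidence, violating $(1)$.

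I expect the main obstacle to be the geometric covering lemma and, dually, the verification that the Urysohn combination never coincides on an antipodal pair; once the count $n+1$ is pinned down by the simplex construction, the remaining work is bookkeeping with the equivariance $g\big(A(x)\big)=-g(x)$. The positive-spanning argument via the regular simplex is the cleanest route to the sphere cover, and the fact that $n$ closed sets would not suffice is precisely the content the equivalence is built around, so I would invoke existence only and not attempt to sharpen the number.
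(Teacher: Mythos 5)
The paper itself offers no proof of this theorem: it is quoted verbatim from Bacon's 1966 paper, so your argument can only be judged on its own terms, and on those terms it is correct. Both contrapositives are handled by the standard devices. For $(2)\Rightarrow(1)$ you antisymmetrize $f$, normalize the nowhere-zero odd map to $h\colon X\to S^{n-1}$ with $h\big(A(x)\big)=-h(x)$, and pull back a closed, antipode-free cover of $S^{n-1}$ by $n+1$ caps built from the vertices of a regular simplex. For $(1)\Rightarrow(2)$ you exploit normality: Urysohn functions separating the disjoint closed sets $C_i$ and $A(C_i)$ (closed because an involution is a homeomorphism), antisymmetrized to equivariant $\psi_i$, assembled into $f=(\psi_1-\psi_{n+1},\dots,\psi_n-\psi_{n+1})$, whose hypothetical coincidence point would force all $\psi_i(x)$ to share a common value that must simultaneously be $-1$ (since $x$ lies in some $C_j$) and $+1$ (since $A(x)$ lies in some $C_k$). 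The equivariance computations and the count of $n+1$ are all right. One justification is slightly short: from $\sum_i\langle x,v_i\rangle=0$ alone you cannot conclude that some summand is positive, since all of them could vanish simultaneously. You need the additional (true) observation that the vertices $v_1,\dots,v_{n+1}$ of a regular simplex centered at the origin span $\R^n$, so a unit vector $x$ cannot be orthogonal to all of them; with that one line added, the covering lemma, and hence the whole proof, is complete.
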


But, as we have already mentioned, the first version of the Borsuk--Lusternik--Schnirelmann theorem worked with the sphere $S^n$. In this case, the involution $A\:x\mapsto -x$ has an important additional property, namely, $\big|xA(x)\big|=\diam S^n$ for any $x\in S^n$. We call such involution \emph{diametrical}.

Let $X$ be a bounded metric space such that for any its closed covering $\{C_i\}$ of cardinality at most $n$ there exists at least one set $C_i$ containing diametrical points $x$ and $x'$, i.e\. points such that $|xx'|=\diam X$. We call such spaces $\dLS_n$-spaces (\emph{diametrical $n$-spaces of Lusternik--Schnirelmann\/}). In particular, if $A\:X\to X$ is a free continuous diametrical involution of a bounded metric space $X$ and for any continuous mapping $f\:X\to\R^n$ there exists $x \in X$ such that $f\big(A(x)\big)=f(x)$, then $X$ is a $\dLS_n$-space by Bacon's theorem. A review on geometry of Lusternik--Schnirelmann spaces can be found in~\cite{Musin}. Note that if $X$ is a $\dLS_n$-space, then $\b(X)>n$.

\begin{cor}\label{cor:dLS}
Let $Y$ be a $\dLS_n$-space for some positive integer $n$, and $X$ be a finite metric space such that $\#X\le n$ and $\diam X\le \diam Y$. Then $2d_{GH}(X,Y)=\diam Y$.
\end{cor}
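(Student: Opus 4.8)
The plan is to reduce Corollary~\ref{cor:dLS} directly to Theorem~\ref{thm:dist-n-simplex-bigger-dim}, since the theorem already does all the metric work once the Borsuk number of $Y$ is controlled. The hypotheses of the corollary give me $\#X\le n$ and $\diam X\le\diam Y$, and the theorem requires $\#X<\b(Y)$ together with $\diam X\le\diam Y$. Thus the only thing I need to verify is the single inequality $\b(Y)>n$, because then $\#X\le n<\b(Y)$ yields $\#X<\b(Y)$, and applying the theorem gives $2d_{GH}(X,Y)=\diam Y$ immediately.

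So the heart of the argument is the claim, already noted in the text just before the corollary, that every $\dLS_n$-space $Y$ satisfies $\b(Y)>n$. I would prove this by contradiction using the definitions. Suppose $\b(Y)\le n$; then by the definition of the Borsuk number there exists a partition $D=\{Y_i\}_{i\in I}$ of $Y$ into at most $n$ subsets of strictly smaller diameter, so there is $\e>0$ with $\diam Y_i\le\diam Y-\e$ for all $i$. The obstacle to feeding this directly into the $\dLS_n$ definition is that the $\dLS_n$ property is phrased for \emph{closed} coverings, whereas the pieces $Y_i$ of a partition need not be closed. I would handle this by passing to closures: set $C_i=\bY_i$. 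These form a closed covering of $Y$ of cardinality at most $n$, so by the $\dLS_n$ property some $C_i$ contains a pair of diametrical points $x,x'$ with $|xx'|=\diam Y$.

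The key step is then to observe that taking closures does not increase the diameter, i.e.\ $\diam\bY_i=\diam Y_i$. This is a standard fact: for any $a,b\in\bY_i$ and any $\dl>0$ one can find $a',b'\in Y_i$ with $|aa'|<\dl$ and $|bb'|<\dl$, whence $|ab|\le|a'b'|+2\dl\le\diam Y_i+2\dl$, and letting $\dl\to0$ gives $|ab|\le\diam Y_i$. Consequently $\diam C_i=\diam Y_i\le\diam Y-\e<\diam Y$. But $C_i$ contains the diametrical pair $x,x'$, so $\diam C_i\ge|xx'|=\diam Y$, a contradiction. Hence no such partition exists and $\b(Y)>n$.

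I expect the main (indeed the only) subtlety to be exactly this interplay between the ``partition into strictly smaller pieces'' definition of $\b(Y)$ and the ``closed covering'' definition of a $\dLS_n$-space; the diameter-preservation-under-closure lemma bridges the two, and the strict inequality $\diam Y_i\le\diam Y-\e$ supplied by the \emph{strictly smaller diameter} condition is what survives the passage to closures and produces the contradiction. Everything after establishing $\b(Y)>n$ is a one-line citation of Theorem~\ref{thm:dist-n-simplex-bigger-dim}, so I would keep the write-up short: state the reduction, prove $\b(Y)>n$ by the closure argument, and conclude.
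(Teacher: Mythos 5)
Your proposal matches the paper's (implicit) argument exactly: the corollary is meant to follow from Theorem~\ref{thm:dist-n-simplex-bigger-dim} together with the remark stated just before it that every $\dLS_n$-space $Y$ has $\b(Y)>n$. Your closure argument correctly supplies the proof of that remark (which the paper leaves unproven), bridging the open-partition definition of $\b(Y)$ and the closed-covering definition of a $\dLS_n$-space via the fact that $\diam\bY_i=\diam Y_i$, so the write-up is complete and correct.
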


Let $S^n(R)$ be the standard $n$-dimensional sphere of radius $R$ endowed with intrinsic metric.

\begin{cor}\label{cor:dLSph}
Let $X$ a finite metric space. Then $d_{GH}\big(X,S^n(R)\big)=R$ for any $R\ge(\diam X)/\pi$ and any integer $n$ such that $n+1\ge\#X$.
\end{cor}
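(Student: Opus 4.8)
The plan is to apply Corollary~\ref{cor:dLS} to the sphere $Y=S^n(R)$, so the main task reduces to verifying that $S^n(R)$, with its intrinsic metric, is a $\dLS_n$-space and computing its diameter. First I would record the elementary geometric fact that the intrinsic (great-circle) distance on $S^n(R)$ attains its maximum exactly at antipodal pairs, with $\bigl|x,-x\bigr|=\pi R$; hence $\diam S^n(R)=\pi R$, and the antipodal map $A\colon x\mapsto -x$ is a free continuous involution which is moreover diametrical in the sense defined above, since $\bigl|x,A(x)\bigr|=\pi R=\diam S^n(R)$ for every $x$.

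Next I would invoke the Borsuk--Ulam theorem in the form stated in the text: for any continuous $f\colon S^n\to\R^n$ there exists $x$ with $f(-x)=f(x)$. Since $S^n(R)$ is homeomorphic to the standard sphere $S^n$ (rescaling does not change the topology, only the metric), the same antipodal-coincidence property holds for continuous maps $f\colon S^n(R)\to\R^n$. Combining this with the fact that $S^n(R)$ is a normal topological space carrying the free continuous diametrical involution $A$, Bacon's Theorem~\ref{thm:Bacon} (as specialized in the paragraph preceding Corollary~\ref{cor:dLS}) shows that for any closed covering $\{C_1,\dots,C_{n+1}\}$ some $C_i$ contains a pair $x,A(x)$, i.e.\ a pair of diametrical points. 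Rephrasing: for any closed covering of cardinality at most $n$ (which can always be padded to $n+1$ closed sets, or handled directly) some member contains two diametrical points. This is precisely the statement that $S^n(R)$ is a $\dLS_n$-space.

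With $Y=S^n(R)$ identified as a $\dLS_n$-space of diameter $\pi R$, the hypotheses of Corollary~\ref{cor:dLS} are checked as follows. The assumption $n+1\ge\#X$ gives $\#X\le n$, matching the cardinality condition. The assumption $R\ge(\diam X)/\pi$ is equivalent to $\diam X\le\pi R=\diam Y$, matching the diameter condition. Therefore Corollary~\ref{cor:dLS} yields
$$
2\,d_{GH}\bigl(X,S^n(R)\bigr)=\diam S^n(R)=\pi R,
$$
and dividing by $2$ gives $d_{GH}\bigl(X,S^n(R)\bigr)=\tfrac{\pi R}{2}$.

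I should flag a discrepancy: the corollary as stated claims $d_{GH}\bigl(X,S^n(R)\bigr)=R$, whereas the computation above produces $\tfrac{\pi R}{2}$. This suggests that the intended normalization differs from a literal reading — most plausibly the radius is scaled so that $\diam S^n(R)=2R$ (for instance, measuring $R$ as a chordal or half-diameter parameter), in which case the hypothesis $R\ge(\diam X)/\pi$ would correspondingly be the diameter condition $\diam X\le\diam S^n(R)$. The main obstacle, then, is not the topological input (which is a clean application of Borsuk--Ulam through Bacon) but reconciling the metric normalization of $S^n(R)$ so that the stated constant $R$ is correct; once the convention is pinned down, the proof is simply: verify $S^n(R)$ is a $\dLS_n$-space via Theorem~\ref{thm:Bacon}, then apply Corollary~\ref{cor:dLS}.
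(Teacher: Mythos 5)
Your overall strategy is exactly the paper's implicit one (the paper states this corollary without a separate proof): use Borsuk--Ulam together with Bacon's Theorem~\ref{thm:Bacon} to see that the sphere with the antipodal involution is a diametrical Lusternik--Schnirelmann space, then invoke Corollary~\ref{cor:dLS}. However, there is a genuine off-by-one gap in your cardinality bookkeeping. The hypothesis $n+1\ge\#X$ gives only $\#X\le n+1$; your claim that it ``gives $\#X\le n$'' is false, and the boundary case $\#X=n+1$ is not covered by your argument, since you apply Corollary~\ref{cor:dLS} with index $n$, which requires $\#X\le n$. The repair is to keep the full strength of what Borsuk--Ulam provides: Bacon's theorem with target $\R^n$ says that every closed cover of $S^n(R)$ by $n+1$ sets has a member containing an antipodal (hence diametrical) pair, and padding covers of smaller cardinality shows that $S^n(R)$ is a $\dLS_{n+1}$-space, not merely a $\dLS_n$-space. (The paragraph of the paper preceding Corollary~\ref{cor:dLS} states only the weaker ``$\dLS_n$'' conclusion, which may have misled you, but for the present corollary the index $n+1$ is essential.) Applying Corollary~\ref{cor:dLS} with $n+1$ in place of $n$ then matches the hypothesis $\#X\le n+1$ exactly; equivalently, $\b\big(S^n(R)\big)\ge n+2>\#X$, so Theorem~\ref{thm:dist-n-simplex-bigger-dim} applies directly.

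Your second concern, about the constant, is well taken and your arithmetic is correct: with the intrinsic metric stated in the paper one has $\diam S^n(R)=\pi R$, the hypothesis $R\ge(\diam X)/\pi$ is precisely $\diam X\le\diam S^n(R)$, and the conclusion delivered by Corollary~\ref{cor:dLS} is $2d_{GH}\big(X,S^n(R)\big)=\pi R$, i.e.\ $d_{GH}\big(X,S^n(R)\big)=\pi R/2$, not $R$. Note, though, that your proposed reconciliation via a chordal-type normalization does not actually work: if the metric were chordal, then $\diam S^n(R)=2R$ and the conclusion ``$=R$'' would be right, but the stated hypothesis $R\ge(\diam X)/\pi$ would then be weaker than the required $\diam X\le 2R$, so the theorem would fail to apply when $2R<\diam X\le\pi R$. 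Hence the discrepancy cannot be removed by reinterpreting $R$; under the paper's own convention the right-hand side of the corollary should read $\pi R/2$ (equivalently, $2d_{GH}\big(X,S^n(R)\big)=\pi R=\diam S^n(R)$), and your argument, once the $\dLS_{n+1}$ correction above is made, proves exactly that statement.
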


Applying Theorem~\ref{thm:Borsuk}, we obtain the following result.

\begin{cor}\label{cor:Bor_GH}
Let $X$ and $Y$ be bounded metric spaces, and $\diam X\le\diam Y$. Assume that for some real $\l$, $0<\l<\diam Y$ and some cardinal $m$, $\#X<m\le\#Y$, we have $2d_{GH}(\l\D_m,Y)=\diam Y$. Then $2d_{GH}(X,Y)=\diam Y$.
\end{cor}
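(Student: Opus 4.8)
The plan is to reduce the statement to Theorem~\ref{thm:dist-n-simplex-bigger-dim}, which asserts $2d_{GH}(X,Y)=\diam Y$ as soon as $\diam X\le\diam Y$ and $\#X<\b(Y)$. The inequality $\diam X\le\diam Y$ is part of the hypotheses, so the only thing to produce is the combinatorial bound $\#X<\b(Y)$; the metric assumption $2d_{GH}(\l\D_m,Y)=\diam Y$ is there precisely to deliver it.

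First I would invoke Theorem~\ref{thm:Borsuk} with the space $Y$ and the cardinal $m$. This is admissible: the hypotheses give $0<\l<\diam Y$ and $m\le\#Y$, while $m\ge2$ follows from $m>\#X\ge1$. That theorem says $Y$ admits a partition into exactly $m$ subsets of strictly smaller diameter if and only if $2d_{GH}(\l\D_m,Y)<\diam Y$; since we are handed equality, not strict inequality, no such $m$-part partition exists.

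It remains to turn ``no $m$-part partition'' into $\b(Y)>m$. The bridge is the elementary fact that, for $2\le m\le\#Y$, the space $Y$ can be split into exactly $m$ parts of strictly smaller diameter precisely when $\b(Y)\le m$: one direction is the definition of $\b$, and the other follows by taking a partition realizing $\b(Y)$ and refining it to exactly $m$ blocks, which never raises any part's diameter and is possible because $\b(Y)\le m\le\#Y$. Hence the absence of an $m$-part partition forces $\b(Y)>m$, and then $\#X<m<\b(Y)$ together with $\diam X\le\diam Y$ lets Theorem~\ref{thm:dist-n-simplex-bigger-dim} conclude $2d_{GH}(X,Y)=\diam Y$. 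The one spot deserving care---the ``main obstacle,'' though it is modest---is this refinement step: for finite $m$ it is immediate, whereas for infinite cardinals one must check that a partition into $\b(Y)$ small blocks really can be refined to exactly $m$ nonempty blocks when $\b(Y)\le m\le\#Y$, a short piece of cardinal arithmetic.
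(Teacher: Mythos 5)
Your proof is correct and follows essentially the route the paper intends: the paper gives no explicit argument beyond the phrase ``Applying Theorem~\ref{thm:Borsuk}'', and the natural reading is precisely your combination of Theorem~\ref{thm:Borsuk} (ruling out a partition of $Y$ into $m$ parts of strictly smaller diameter, whence $\b(Y)>m>\#X$ via the refinement/monotonicity fact) with Theorem~\ref{thm:dist-n-simplex-bigger-dim}. Your flagging of the refinement step for infinite cardinals is exactly the point the paper leaves implicit, and your sketch of it is sound.
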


\markright{References}

\end{document}